\newtheorem{thm}{Theorem}[section]
\newtheorem{cor}[thm]{Corollary}
\newtheorem{lem}[thm]{Lemma}
\newtheorem{prop}[thm]{Proposition}
\newtheorem{rem}[thm]{Remark}
\numberwithin{equation}{section}
\newcommand{\Om}{{\Omega}}
\newcommand{\R}{{\rm I}\!{\rm R}}
\title[Multiple solutions for an indefinite elliptic problem]{Multiple solutions for an indefinite elliptic problem with critical growth in the gradient}
\begin{document}

\vspace{1cm}

\author{Louis Jeanjean}
\address{L. Jeanjean \newline Laboratoire de Math\'ematiques
(UMR 6623), Universit\'e de Franche-Comt\'e,
16 route de Gray
25030 Besan\c{c}on Cedex, France}
\email{louis.jeanjean@univ-fcomte.fr}

\author{Humberto Ramos Quoirin}
\address{H. Ramos Quoirin \newline Universidad de Santiago de Chile, Casilla 307, Correo 2, Santiago, Chile}
\email{\tt humberto.ramos@usach.cl}

\subjclass{35J20, 35J61, 35J91} \keywords{indefinite variational problem,
critical growth in the gradient, superlinear term with slow growth, Cerami condition}
\thanks{The second author was supported by the FONDECYT project 11121567}

\begin{abstract}
We consider the problem 
\begin{equation*}
-\Delta u =c(x)u+\mu|\nabla u|^2 +f(x), \quad u \in H^1_0(\Omega) \cap L^{\infty}(\Omega),
\leqno{(P)}
\end{equation*}
where $\Omega$ is a bounded domain of $\mathbb{R}^N$, $N \geq 3$, $\mu>0$ and  $c,   f \in L^q(\Om) \text{ for some } q>\frac{N}{2}$ with $  f\gneqq 0. $
Here $c$ is allowed to change sign. We show that when $c^+ \not \equiv 0$ and $c^+ +\mu f$ is suitably small, this problem has at least two positive solutions. This result contrasts with the case $c \leq 0$, where uniqueness holds. To show this multiplicity result we first transform $(P)$ into a semilinear problem having a variational structure. Then we are led to the search of two critical points for a functional whose superquadratic part is indefinite in sign and has a so called {\it slow growth} at infinity. The key point is to show that the Palais-Smale condition holds.
\end{abstract}



\maketitle


\section{Introduction}

Let $\Omega$ be a bounded domain of $\mathbb{R}^N$ with $N \geq 3$. In this paper we are concerned with the boundary value problem
\begin{equation*}
-\Delta u =c(x)u+\mu|\nabla u|^2 +f(x), \quad u \in H^1_0(\Omega) \cap L^{\infty}(\Omega),
\leqno{(P)}
\end{equation*}
where 
$$
\mu>0,  \quad \text{and} \quad c, f \in L^q(\Om) \text{ for some } q>\frac{N}{2},\  f\gneqq 0. \leqno{(\mathcal{H})}
$$
Quasilinear elliptic equations with a gradient dependence up to the critical growth $|\nabla u|^2$ were first studied by Boccardo, Murat and Puel in the 80's \cite{BoMuPu1, BoMuPu2.5, BoMuPu3} and have been an active field of research until now, see for example \cite{AbPePr,GrMuPo,HaRu}. To situate our problem we underline that we are interested in bounded solutions. The main goal of this paper is to carry on the study of non-uniqueness of solutions for such problems, which $(P)$ is a prototype of.  \medskip

The sign of $c$ plays in $(P)$ a central role regarding  uniqueness, as well as existence, of bounded solutions. We refer to \cite{JS} for a heuristic discussion on the influence of the sign of $c$ on the nature of the problem. The case $c \leq  - \alpha_0$ a.e. in $\Omega$ for some $\alpha_0 >0$ is referred to as the coercive case. In this case, the existence of solutions holds under very general assumptions and it was shown in  \cite{BaBlGeKo,BaMu} (see also \cite{ArSe,BaPo}) that there is a unique bounded solution. When one just requires $c \leq 0$ (in particular when $c \equiv 0$) the situation is already more complex. The fact that restrictions on the data are necessary for $(P)$ to have a solution was first observed in 
\cite{FeMu1, FeMu2}. Concerning uniqueness, some partial results are given in \cite{BaBlGeKo,BaMu}, but it was only in \cite{ArDeJeTa1}  that uniqueness of bounded solutions was established under the mere condition $c \leq 0$. See
 also \cite{ArDeJeTa2} for an extension to a larger class of problems. \medskip

The case $c\gneqq 0$ started to be studied only recently. Surely in part because it was not accessible by the methods traditionally used in the coercive case. It was  shown in \cite{JS},  when $c\gneqq 0$ and for any $c, \mu$ and $f$ sufficiently small in an appropriate sense, that $(P)$ has two solutions. See also \cite{AbBi2, AbDaPe, Si}  for related results. Note that the case where $\mu$ is allowed to be non constant was treated in \cite{ArDeJeTa1} leading also, when $c\gneqq 0$  and under appropriate conditions,  to the existence of two bounded solutions. \medskip

In view of these results it remained to analyse the case where $c$ is allowed to change sign, which is the aim of the present paper. Roughly speaking we shall show that the uniqueness is lost as soon as $c^+ \not \equiv 0$, where $c^+ = \max\{0, c\}$, see Theorem \ref{t1}. \medskip

To obtain this result we first make a change a variable. It is  well-known, since \cite{KaKr}, that the change of variable $$v=\frac{1}{\mu} (e^{\mu u}-1)$$ rids the gradient term of $(P)$, reducing it to a semilinear problem with a variational structure. We consider here a slight variation of this change of variable, namely, 
\begin{equation}
\label{cv}
v=\frac{1}{\lambda} (e^{\mu u}-1),
\end{equation}
where $\lambda>0$ will be fixed later at our convenience. This  leads to the problem 
\begin{equation*}
-\Delta v -(c(x)+\mu f(x))v=c(x)g_\lambda(v) +\frac{\mu}{\lambda}f(x), \quad v \in H^1_0(\Omega) \cap L^{\infty}(\Omega),
 \leqno{(Q)}
\end{equation*}
where 
\begin{equation}
\label{gl}
g_\lambda(s)=\begin{cases} \frac{1}{\lambda} (1+\lambda s)\ln (1+\lambda s)-s & \text{ if } s \geq 0\\
0 & \text{ if } s \leq 0.\end{cases}
\end{equation}
We shall prove in Lemma \ref{pq} that if $v$ is a non negative solution of $(Q)$ then $u$ defined by \eqref{cv} is a non negative (and therefore positive, by Harnack inequality) solution of $(P)$. Solutions of $(Q)$ will  be obtained as critical points of the functional
$$I(v)=\frac{1}{2} \int_\Omega \left[|\nabla v|^2-[c(x)+\mu f(x)](v^+)^2\right]
-\int_\Omega c(x)G_\lambda(v^+) - \frac{\mu}{\lambda}\int_\Omega f(x)v$$
defined on $H_0^1(\Omega)$ and where $G_\lambda(s)=\int_0^s g_\lambda(t)\, dt$.  Note that since $f\geq 0$, critical points of $I$ are necessarily non-negative, see Lemma \ref{pq}. Since $g_{\lambda}$ behaves essentially as $s \ln(s+1)$ for $s$ large, the superquadratic part of $I$ has at infinity a growth which is usually referred to as a {\it slow superlinear growth}. \medskip

To obtain two critical points we start following the strategy used in  \cite{JS}. Note that if the positive part of $c+\mu f$ is not `too large' in a suitable sense (cf. Lemma \ref{l1}) then  $$\int_\Omega \left[|\nabla v|^2-[c(x)+\mu f(x)](v^+)^2\right]$$ is coercive. Moreover, as $G_\lambda$ is superquadratic, we shall prove that $I$ takes positive values on a sphere $\|v\|=R_\lambda$ if the lower order term coefficient $\frac{\mu}{\lambda}$ is sufficiently small. Here comes the advantage of the change of variable \eqref{cv}: we may take $\lambda$ sufficiently large so as to have $\frac{\mu}{\lambda}$ sufficiently small. This argument is possible because $G_\lambda$ grows sufficiently slowly with respect to $\lambda$, see Lemma \ref{lg}. Moreover it is easily seen that since $f \not \equiv 0$, $I$ takes negative values in the ball $B(0, R_{\lambda})$. Finally, since $c^+ \not \equiv 0$, it is possible to show that $I$ takes a negative value at some point $v_0$ outside of the ball $B(0, R_{\lambda})$. Thus $I$ has a mountain-pass geometry and it is reasonable to search for a first critical point as a minimizer of $I$ in $B(0, R_{\lambda})$ and a second one at the mountain pass level. The existence of a minimizer will follow from a standard lower semi continuity argument, whereas in proving the existence of a mountain-pass critical point we will face the difficulty of showing that Palais-Smale sequences are bounded. \medskip

We recall that the Palais-Smale condition holds for $I$ if any sequence $(u_n) \subset H^1_0(\Omega)$ such that $(I(u_n)) \subset \R$ is bounded and $||I'(u_n)||_* \to 0$ admits a convergent subsequence. The boundedness of such sequences proves to be a delicate issue due to the fact that $c$ is sign-changing and $g_{\lambda}$ has a slow growth at infinity and thus in particular does not satisfy an Ambrosetti-Rabinowitz type condition. Let us recall that a nonlinearity $g$ is said to satisfy the  Ambrosetti-Rabinowitz condition if
$$
\text{There exist } \theta>2 \text{ and } s_1>0 \text{ such that }
0<\theta G(s)\leq sg(s) \quad \forall s \geq s_1,
\leqno{(\mathcal{AR})}
$$
where $G(s)=\int_0^s g(t)\ dt$.  This condition is known to be central when proving that Palais-Smale sequences are bounded.  When the domain $\Omega \subset \R^N$ is bounded and the nonlinearity is subcritical, the boundedness leads directly to the strong convergence of a subsequence. \medskip

In the case where the superquadratic term is positive, many efforts have been done to weaken the condition $(\mathcal{AR})$. However, to the best of our knowledge, this issue has not been considered for functionals of the type
$$J(u)=\int_\Omega \left(\frac{1}{2} |\nabla u|^2 - c(x)G(u)\right), \quad u \in H_0^1(\Omega) $$
when $c$ changes sign and $g$ is a superlinear function not satisfying $(\mathcal{AR})$. A typical example of such a nonlinearity is $g(s)=s\ln (s+1)$.

When $g(s)=s^{p-1}$ with $p \in [2, 2^*)$, using the homogeneity of $g$ it is straightforward that $J$ satisfies the Palais-Smale condition. When $g$ is not powerlike, this issue becomes delicate, as shown in \cite{AT} (see also \cite{ADP}), where the authors assume that $g$ is superlinear and asymptotically powerlike at infinity, i.e.
$$\text{There exist } p>2 \text{ such that } \lim_{u \to \infty} \frac{g(u)}{u^{p-1}}=1.\leqno{(\mathcal{G})}$$
Note that this condition implies $(\mathcal{AR})$. Furthermore, in \cite{AT} one needs to assume the so called {\it thick zero set} condition on $c \in \mathcal{C}(\overline{\Omega})$:
$$\overline{(\Om_+)} \cap \overline{(\Om_-)}=\emptyset, \leqno{(\mathcal{AT})}
$$
where
 $$\Omega_+:= \{x \in \Omega; \ c(x)>0\} \quad \text{and} \quad \Omega_-:= \{x \in \Omega; \ c(x)<0\}.$$
In \cite{RTT}, still under $(\mathcal{G})$, the authors were able to remove $(\mathcal{AT})$, but at the expense of some alternative strong conditions on $c$. \medskip

In our problem we prove that the Palais-Smale condition is satisfied without assuming $(\mathcal{AT})$ nor any special condition on $c$.  Given $V \in L^q(\Omega)$, with $q>\frac{N}{2}$, we denote by $\lambda_1(V)$ the first eigenvalue of the problem
$$ \left\{
\begin{array}{lll}
-\Delta u +V(x)u=\lambda u & {\rm in } & \Omega,\\
          u  = 0    & {\rm on } & \partial \Omega.
\end{array}\right. 
$$
Let us recall that $\lambda_1(V)$ is given by
$$\lambda_1(V)=\inf_{\|u\|_2=1} \int_\Omega \left( |\nabla v|^2+V(x)v^2\right).$$
It is well-known that $\lambda_1(V)$ is simple, so that it is achieved by an unique $\varphi_1>0$ such that 
$\|\varphi_1\|_2=1$.

Our main result is the following:
\begin{thm}
\label{t1}
Assume $(\mathcal{H})$ and $c^+ \not \equiv 0$. If $ \lambda_1 (-c - \mu f)>0$ then $(P)$ has two positive solutions.
\end{thm}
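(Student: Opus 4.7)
My plan is to work with the reformulated problem $(Q)$ obtained through the change of variable \eqref{cv}, and to produce two distinct non-negative critical points of the associated functional $I$; by Lemma \ref{pq} they will then yield two positive solutions of $(P)$. The parameter $\lambda>0$ stays free for the moment and will be fixed sufficiently large at the end so as to make both $\mu/\lambda$ and the growth rate of $G_\lambda$ quantitatively small (cf.\ Lemma \ref{lg}).

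I would first verify a mountain-pass geometry for $I$. Since $\lambda_1(-c-\mu f)>0$, Lemma \ref{l1} should guarantee that the quadratic form
\[
Q(v) := \int_\Omega \bigl(|\nabla v|^2 - [c+\mu f](v^+)^2\bigr)
\]
satisfies $Q(v)\ge \alpha\|v\|^2$ for some $\alpha>0$. Combining this with the controlled slow growth of $G_\lambda$ and the smallness of $\mu/\lambda$ for $\lambda$ large, one obtains $R_\lambda,\rho_\lambda>0$ such that $I(v)\ge\rho_\lambda$ on the sphere $\|v\|=R_\lambda$. Since $f\gneqq 0$, the linear term $-(\mu/\lambda)\int f v$ forces $\inf_{\overline{B(0,R_\lambda)}}I<0$, and a standard weak lower semicontinuity argument produces a first critical point $v_1$ as a minimizer in this ball, automatically non-negative since $f\ge 0$ forces any critical point of $I$ to be so. Finally, the hypothesis $c^+\not\equiv 0$ lets me pick $\varphi\in H_0^1(\Omega)$ non-negative and supported in $\{c>0\}$; the superquadratic growth of $G_\lambda$ then yields $I(t\varphi)\to-\infty$ as $t\to\infty$, producing a point $v_0$ with $\|v_0\|>R_\lambda$ and $I(v_0)<0$.

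The mountain-pass theorem in its Cerami form then delivers a Cerami sequence $(u_n)\subset H_0^1(\Omega)$ at the min-max level $c_\lambda\ge\rho_\lambda>I(v_1)$, so that any cluster point of $(u_n)$ is a critical point $v_2\ne v_1$. Because $g_\lambda$ is subcritical and $\Omega$ is bounded, once $(u_n)$ is shown to be bounded in $H_0^1(\Omega)$, the compactness of the Sobolev embedding delivers strong convergence of a subsequence through a standard argument.

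The main obstacle, and the technical heart of the paper, is precisely the boundedness of Cerami sequences for $I$. The nonlinearity $g_\lambda(s)\sim \mu^{-1}s\ln(1+\lambda s)$ does not satisfy $(\mathcal{AR})$, $c$ changes sign, and no thick-zero-set assumption like $(\mathcal{AT})$ is made, so the arguments of \cite{AT,RTT} do not apply directly. I would argue by contradiction, assuming $t_n:=\|u_n\|\to\infty$, rescaling $w_n:=u_n/t_n$ and extracting a weak limit $w\ge 0$ in $H_0^1(\Omega)$. The Cerami property $(1+\|u_n\|)\|I'(u_n)\|_\ast\to 0$ should yield the key identity
\[
\langle I'(u_n),u_n\rangle - 2I(u_n)=o(1),
\]
whose leading part is $\int_\Omega c(x)\bigl(u_n g_\lambda(u_n)-2G_\lambda(u_n)\bigr)$ and grows slowly but superquadratically on $\{w>0\}$. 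A careful splitting of $\Omega$ according to whether $w>0$ or $w=0$, combined with the coercivity provided by $\lambda_1(-c-\mu f)>0$ and the $L^q$-integrability of $c$, should then force the required contradiction and complete the proof.
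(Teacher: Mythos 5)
Your overall architecture coincides with the paper's: pass to $(Q)$ via \eqref{cv}, use Lemma \ref{l1} and Lemma \ref{lg} to get the mountain-pass geometry of Proposition \ref{p1} for $\lambda$ large, take a negative minimizer in $B(0,R_\lambda)$ as the first critical point, use $c^+\not\equiv 0$ to find $v_0$ with $I(v_0)<0$ outside the ball, and reduce everything to the boundedness of Palais--Smale (or Cerami) sequences. Up to that point the proposal is sound. The gap is exactly at the step you label as the technical heart, and it is not merely a matter of missing details: the mechanism you propose cannot work. You assert that the leading part of $\langle I'(u_n),u_n\rangle-2I(u_n)$, namely $\int_\Omega c\,(u_n^+ g_\lambda(u_n^+)-2G_\lambda(u_n^+))$, ``grows slowly but superquadratically.'' It does not. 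From \eqref{gl} and \eqref{eg} one computes
\begin{equation*}
\tfrac12\,s\,g_\lambda(s)-G_\lambda(s)=\frac{s^2}{4}+\frac{s}{2\lambda}-\frac{s}{2\lambda}\ln(1+\lambda s)-\frac{1}{2\lambda^2}\ln(1+\lambda s),
\end{equation*}
which is asymptotically $s^2/4$, i.e.\ \emph{exactly} quadratic (indeed quadratic minus a positive superlinear correction). After dividing by $\|u_n\|^2$ this quantity only sees $\int_\Omega c\,(v_n^+)^2$, which tends to $0$ once the weak limit vanishes, so no contradiction comes out of it directly. Moreover, the splitting of $\Omega$ into $\{w>0\}$ and $\{w=0\}$ is vacuous: the first stage of the argument (Fatou's lemma applied to $c\,g_\lambda(u_n^+)/\|u_n\|$, then the coercivity $\alpha_c>0$ inherited from $\lambda_1(-c-\mu f)>0$) forces $w\equiv 0$, so there is nothing left to split.

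The missing idea is how to extract a contradiction from $v_n\to 0$ in $L^2$. The paper tests $I'(u_n)$ with $u_n$ and divides by $\|u_n\|^2$ to obtain $\int_\Omega c\,(v_n^+)^2\ln(1+\lambda\|u_n\|v_n^+)\to 1$, then splits the logarithm as $\ln\|u_n\|+\ln(\lambda v_n^+ +\|u_n\|^{-1})$. The term $\ln(\|u_n\|)\int_\Omega c\,(v_n^+)^2$ is shown to vanish precisely by means of the identity \eqref{ee7} for $H_\lambda=\frac12 g_\lambda s-G_\lambda$ above: solving it for $\frac14\int_\Omega c\,(u_n^+)^2$ and multiplying by $4\ln\|u_n\|/\|u_n\|^2\to 0$ kills that contribution. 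What remains, $\int_\Omega c\,(v_n^+)^2\ln(\lambda v_n^+ +\|u_n\|^{-1})\to 1$, is incompatible with $v_n\to 0$ in $L^r(\Omega)$ for $r<2^*$. So the quadratic identity you wrote down is indeed used, but only as an auxiliary estimate inside this logarithm-splitting argument, not as a direct source of superquadratic growth. Without this (or an equivalent) device your proof of boundedness does not close, and everything downstream (strong convergence, the second critical point) depends on it.
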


\begin{rem}
In \cite[Theorem 2]{JS}, assuming $c \gneqq 0$, it is proved that if 
\begin{equation}\label{BestC}
 \|\mu f\|_{\frac{N}{2}}<C_N,
\end{equation}
where  $C_N$  denotes the best Sobolev constant for the embedding $H_0^1(\Omega) \subset L^{2^*}(\Omega)$, then
there exists $\overline{c} >0$ such that $(P)$ has at least two bounded solutions if $\|c\|_q<\overline{c}$. We observe that under (\ref{BestC}), taking $||c||_q$ small enough one has 
$\lambda_1 (-c - \mu f) >0. $ Thus our condition is more general, even when $c \geq 0$.  We point out however that $f$ is allowed to be sign changing in \cite{JS}. 
\end{rem}

In \cite[Remark 3.2]{ArDeJeTa1} it is shown that when $c \equiv 0$, $(P)$ has a solution if and only if $\lambda_1(- \mu f) >0$. We now complement this result.

\begin{lem}\label{optimal}
Assume $(\mathcal{H})$.
\begin{enumerate}
\item If $c \geq 0$ then $\lambda_1(-c - \mu f) >0$ is necessary  for $(P)$ to have a non-negative solution.
\item $\lambda_1(-c)> 0 $ is necessary for $(P)$ to have a non negative solution and under this condition every solution of $(P)$ is non-negative.
\end{enumerate}
\end{lem}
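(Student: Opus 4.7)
The plan is to treat the three claims separately, testing against a first eigenfunction of the appropriate linear Schr\"odinger operator; Part (1) additionally requires passing through the transformed equation $(Q)$.

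For Part (1), I would first observe that if $u\geq 0$ solves $(P)$, then $v := \frac{1}{\lambda}(e^{\mu u}-1) \geq 0$ solves $(Q)$ in $H^1_0(\Om)\cap L^\infty(\Om)$ (this is the converse of Lemma~\ref{pq}, obtained by the same chain-rule computation). Let $\varphi_1>0$ denote a first eigenfunction of $-\Delta - (c+\mu f)$. Taking $\varphi_1$ as test function in $(Q)$ and invoking the weak form of the eigenvalue equation yields
\[
\lambda_1(-c-\mu f)\int_\Om v\,\varphi_1 \;=\; \int_\Om c\,g_\lambda(v)\,\varphi_1 \,+\, \frac{\mu}{\lambda}\int_\Om f\,\varphi_1.
\]
Under $c \geq 0$, and since $g_\lambda\geq 0$ on $[0,\infty)$, $v\geq 0$ and $f\gneqq 0$, the right-hand side is strictly positive, while $\int_\Om v\,\varphi_1\geq 0$; this forces $\lambda_1(-c-\mu f)>0$.

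For the necessity in Part (2), let $\psi_1>0$ be a first eigenfunction of $-\Delta-c$. Testing $(P)$ against $\psi_1$ and subtracting the weak eigenvalue identity tested against $u$ cancels the $\int cu\,\psi_1$ terms and gives
\[
\lambda_1(-c)\int_\Om u\,\psi_1 \;=\; \mu\int_\Om |\nabla u|^2\psi_1 \,+\, \int_\Om f\,\psi_1 \;>\;0,
\]
since $f\gneqq 0$ and $\psi_1>0$. As $u\geq 0$ makes $\int_\Om u\,\psi_1\geq 0$, necessarily $\lambda_1(-c)>0$. For the positivity claim, I would assume $\lambda_1(-c)>0$ and test $(P)$ against $-u^-\in H^1_0(\Om)$. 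Using the truncation identities $\int \nabla u\cdot\nabla(-u^-)=\int|\nabla u^-|^2$ and $u\,u^-=-(u^-)^2$, together with $|\nabla u|^2 u^-\geq 0$ and $fu^-\geq 0$, one obtains
\[
\int_\Om|\nabla u^-|^2 - c(u^-)^2 \;=\; -\mu\int_\Om|\nabla u|^2 u^- \,-\, \int_\Om f\,u^- \;\leq\; 0.
\]
The variational characterization of $\lambda_1(-c)$ applied with $w=u^-$ yields $\int_\Om|\nabla u^-|^2 - c(u^-)^2 \geq \lambda_1(-c)\|u^-\|_2^2$, so $\lambda_1(-c)\|u^-\|_2^2\leq 0$; since $\lambda_1(-c)>0$, this forces $u^-\equiv 0$.

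The main subtlety is that a direct test of $(P)$ against $\varphi_1$ (bypassing $(Q)$) produces a remainder of the form $\int_\Om f(1-\mu u)\varphi_1$ whose sign is unclear, so the passage through the semilinear equation $(Q)$ is essential for Part (1)---the sign information then lies in $g_\lambda\geq 0$. No such manoeuvre is needed for Part (2), as the potential of the associated eigenvalue problem does not involve $\mu f$. Admissibility of $\varphi_1,\psi_1$ as test functions in $(P)$ is guaranteed by standard $H^1_0(\Om)\cap L^\infty(\Om)$ regularity for linear eigenvalue problems with potential in $L^q(\Om)$, $q>\frac{N}{2}$.
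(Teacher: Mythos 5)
Your proof is correct and follows essentially the same route as the paper: Part (1) is obtained by passing to the transformed problem $(Q)$ and testing against the first eigenfunction of $-\Delta-(c+\mu f)$, using $c\geq 0$ and $g_\lambda\geq 0$, while Part (2) tests $(P)$ directly against the eigenfunction of $-\Delta-c$ and then against $-u^-$ together with the variational characterization of $\lambda_1(-c)$. The only difference is cosmetic (your write-up even fixes a small $u$-versus-$v$ slip in the paper's displayed identity for Part (1)).
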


\begin{rem}
As far as non negative solutions are concerned, Lemma \ref{optimal} (1) shows that when $c \geq 0$ the condition $\lambda_1(-c - \mu f) >0$ is necessary in Theorem \ref{t1}. However, other kinds of solutions of $(P)$, namely negative or sign-changing solutions, may exist if $\lambda_1(-c - \mu f) \leq 0$. See \cite{DeJe} in this direction.
\end{rem}


This paper is organized as follows.  In Section $2$ we prove some preliminary results and show that the functional $I$ has the geometry described above. Section $3$ is devoted to the Cerami condition for $I$. Finally in  Section $4$ we prove Theorem \ref{t1}, Lemma \ref{optimal}  and we give a condition under which $\lambda_1 (-c - \mu f) >0$ holds, see Lemma \ref{lf}.

\subsection{Notation}
\begin{itemize}

\item The Lebesgue norm in $L^r (\Omega)$ will be denoted by $\| \cdot
\|_r$ and the usual norm of $H_0^1(\Omega)$ by $\|\cdot \|$, i.e. 
$\|u\|=\|\nabla u\|_2$. The Holder conjugate of $r$ is denoted by $r'$.
\item The weak convergence is denoted by $\rightharpoonup$. 
\item The positive
and negative parts of a function $u$ are defined by $u^{\pm} :=\max
\{\pm u,0\}$. 
\item If $U \subset \R^N$ then we denote its interior by int $U$ and its closure by $\overline{U}$. The characteristic function of $U$ is denoted by $\chi_U$.
\item We denote by $B(0,R)$ the ball of radius $R$ centered at $0$ in $H_0^1(\Omega)$.
\end{itemize}

\medskip
\section{Preliminaries}
\medskip

\begin{lem}
\label{pq}
Assume $(\mathcal{H})$.
\strut
\begin{enumerate}
\item If $v$ is a non-negative  solution of $(Q)$ then $u = \mu^{-1}ln(1+ \lambda v)$ is a non-negative  solution of $(P)$. Similarly if $u$ is a non negative solution of $(P)$ then $v$ given by (\ref{cv}) is a non negative solution of $(Q)$.\smallskip
\item If $v$ is a critical point of $I$  then $v$ is a non-negative solution of $(Q)$. \smallskip
\item If $u$ is a non-negative solution of $(P)$ then $u$ is positive.
\end{enumerate}

\end{lem}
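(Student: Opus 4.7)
The plan is to treat the three parts separately since they are of different nature: (1) is a direct computation justifying the change of variable \eqref{cv}; (2) is a variational argument combined with a regularity step; (3) is an application of the strong maximum principle.

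For (1), I would first observe that if $v \in H^{1}_{0}(\Omega)\cap L^{\infty}(\Omega)$ with $v\geq 0$, then $u=\mu^{-1}\ln(1+\lambda v)$ belongs to the same space, is non-negative, and satisfies $\nabla u=\frac{\lambda}{\mu(1+\lambda v)}\nabla v$ by the chain rule. A direct distributional computation then gives
\[
-\Delta u-\mu|\nabla u|^{2}=\frac{\lambda}{\mu(1+\lambda v)}(-\Delta v).
\]
Substituting the expression for $-\Delta v$ coming from $(Q)$ and unfolding the definition \eqref{gl} of $g_{\lambda}$, the factor $(1+\lambda v)$ cancels cleanly against the $(1+\lambda v)\ln(1+\lambda v)$ coming from $g_{\lambda}$, and one recovers $-\Delta u = c(x)u+\mu|\nabla u|^{2}+f(x)$, that is $(P)$. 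The converse direction is the symmetric computation starting from $v=\frac{1}{\lambda}(e^{\mu u}-1)$.

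For (2), I would test the Euler-Lagrange equation $\langle I'(v),\varphi\rangle=0$ with $\varphi=v^{-}\in H^{1}_{0}(\Omega)$. Because $v^{+}v^{-}=0$ and $g_{\lambda}(v^{+})v^{-}=0$ pointwise, while $\int_{\Omega}\nabla v\cdot\nabla v^{-}=-\|\nabla v^{-}\|_{2}^{2}$, the identity collapses to
\[
\|\nabla v^{-}\|_{2}^{2}+\frac{\mu}{\lambda}\int_{\Omega} f\,v^{-}=0,
\]
and since $f\geq 0$ and $v^{-}\geq 0$, both non-negative terms must vanish separately; in particular $v^{-}\equiv 0$, so $v\geq 0$. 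Then $v^{+}=v$ and the remaining equation from $I'(v)=0$ is exactly the weak formulation of $(Q)$. To complete the argument, I would use the growth $g_{\lambda}(s)\leq C_{\lambda}(1+s\ln(1+\lambda s))$ together with $c,f\in L^{q}(\Omega)$, $q>N/2$, and run a standard Brezis--Kato / Moser iteration to conclude $v\in L^{\infty}(\Omega)$.

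For (3), I would note that a non-negative solution $u$ of $(P)$ satisfies
\[
-\Delta u-c(x)u=\mu|\nabla u|^{2}+f(x)\geq f(x)\gneqq 0,
\]
so $u\not\equiv 0$ (otherwise $f\equiv 0$, a contradiction); since $c\in L^{q}(\Omega)$ with $q>N/2$, the strong maximum principle valid in this regularity class (as in Trudinger, or Pucci--Serrin) applies and gives $u>0$ in $\Omega$. The main obstacle I anticipate is the $L^{\infty}$ bound in (2): the slow superlinear growth $g_{\lambda}(s)\sim s\ln(s+1)$ means the iteration scheme has to be set up with some care, whereas (1) is essentially bookkeeping once the identity for $-\Delta u - \mu|\nabla u|^2$ is in hand, and (3) reduces to invoking the correct maximum principle.
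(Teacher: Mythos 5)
Your proposal is correct and follows essentially the same route as the paper: part (1) is the same test-function/chain-rule cancellation (the paper carries it out by testing against $\psi=\phi/(1+\lambda v)$), part (2) uses the same $v^-$ test (the paper writes $\varphi=-v^-$) and the paper handles the $L^\infty$ bound by citing \cite[Lemma 13]{JS} rather than redoing the Moser iteration you sketch, and part (3) is the same Harnack/strong maximum principle argument from \cite{Tr}.
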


\begin{proof}
 Let $v\geq 0$ be a solution of $(Q)$.
From the expression of $g_\lambda$ it is seen that $v$ solves
\begin{equation}
\label{eqv}
-\Delta v=\frac{1}{\lambda}c(x) (1+\lambda v)\ln (1+\lambda v)+\frac{\mu}{\lambda} f(x)(1+\lambda v).
\end{equation}
Let $u=\frac{1}{\mu} \ln(1+\lambda v)$, i.e. $\exp(\mu u)=1+\lambda v$. Since $v \geq 0$ and $\nabla u=\frac{\lambda}{\mu}\frac{\nabla v}{1+\lambda v}$, one may easily see that $u\in H_0^1(\Omega)$. If $\phi \in H_0^1(\Omega)$ then $\psi=\frac{\phi}{1+\lambda v} \in H_0^1(\Omega)$, so that \eqref{eqv} provides
\begin{eqnarray}
\label{eqv2}
\nonumber
\int_{\Omega} \nabla v \nabla \psi &=&\frac{1}{\lambda}\int_\Omega c(x)\psi (1+\lambda v)\ln (1+\lambda v)  +\frac{\mu}{\lambda} \int_{\Omega} f(x)\psi(1+\lambda v)\\
&=&\frac{1}{\lambda}\int_\Omega c(x)\phi \ln (1+\lambda v) +\frac{\mu}{\lambda} \int_{\Omega} f(x)\phi.
\end{eqnarray} 
Now, from $\nabla v=\frac{\mu}{\lambda} \exp(\mu u) \nabla u$ and $\nabla \psi=\frac{\nabla \phi}{1+\lambda v}-\frac{\lambda \phi \nabla v}{(1+\lambda v)^2}$, we get

\begin{eqnarray*}
\int_{\Omega} \nabla v \nabla \psi &=& \frac{\mu}{\lambda} \int_\Omega \exp(\mu u) \nabla u \left(\frac{\nabla \phi}{1+\lambda v}-\frac{\lambda \phi \nabla v}{(1+\lambda v)^2}\right)
=\frac{\mu}{\lambda} \int_\Omega \nabla u \left(\nabla \phi -\frac{\lambda \phi \nabla v}{1+\lambda v}\right)\\
&=&
\frac{\mu}{\lambda} \int_\Omega \nabla u \left(\nabla \phi -\frac{\mu \phi \exp(\mu u) \nabla u}{1+\lambda v}\right)=\frac{\mu}{\lambda} \int_\Omega \left(\nabla u \nabla \phi -\mu |\nabla u|^2 \phi\right).
\end{eqnarray*}
Futhermore, we have
$$\frac{1}{\lambda}\int_\Omega c(x)\phi \ln (1+\lambda v)=\frac{\mu}{\lambda} \int_\Omega c(x)u \phi,$$
so we deduce from \eqref{eqv2} that $u$ is a solution of $(P)$. By similar arguments we prove the reverse statement. This proves (1). \\

\noindent To prove (2), let $v$ be a critical point of $I$. Then
\begin{equation}
\label{ev1}
\int_{\Omega} \left[\nabla v \nabla \varphi -(c(x)+\mu f(x))v^+ \varphi\right]  - \int_{\Omega}c(x)g_\lambda(v^+)\varphi  -\frac{\mu}{\lambda} \int_{\Omega} f(x)\varphi=0
\end{equation}
 for all $\varphi \in H_0^1(\Omega)$. Taking $\varphi=-v^-$ we get
$$\int_\Omega |\nabla v^-|^2+\frac{\mu}{\lambda} \int_\Omega f(x)v^-=0.$$
Since $f\geq 0$, we get $$\int_\Omega |\nabla v^-|^2\leq 0$$
and it follows that $v^-\equiv 0$, i.e. $v\geq 0$.  The proof that $v \in L^{\infty}(\Omega)$ can be found in \cite[Lemma 13]{JS}, so we omit it.\\

Finally, if $u \geq 0$ is a solution of $(P)$ then, since $\mu >0$ and $f \geq0$, $u$ is a bounded weak supersolution of
$$-\Delta u =c(x)u, \quad u \in H_0^1(\Omega).$$
By a  standard argument relying on the Harnack inequality, see  \cite[Theorem 1.2]{Tr}, we have either $u \equiv 0$ or $u>0$. Since $f \gneqq 0$, we get $u>0$.
\end{proof}

We shall prove now that when $\lambda_1 (-c - \mu f) >0$ the functional $I$ takes positive values on a sphere centered at the orign if $\lambda$  is sufficiently large. To this aim we first  analyze the behavior of $g_\lambda$ with respect to $s$ and $\lambda$.
Note that for any $\lambda>0$ fixed, $g_\lambda$ is superlinear at $ + \infty $, i.e.
\begin{equation}\label{superlinear}
\frac{g_\lambda(s)}{s} \rightarrow \infty \quad \text{ as } \quad s \to + \infty.
 \end{equation}
Furthermore $g_\lambda$ is uniformly superlinear at $0$, i.e. 
\begin{equation}
\label{us}
\frac{g_\lambda(s)}{s} \rightarrow 0 \quad \text{ as } \quad s \to 0 \quad \text{ uniformly with respect to } \lambda.
\end{equation}
We provide now a global estimate on $G_\lambda$ involving $s$ and $\lambda$:
\begin{lem}
\label{lg}
Let $p>1$. Given $\varepsilon>0$ there exists $C_\varepsilon>0$ such that 
$$G_\lambda(s)\leq \varepsilon s^2 +C_\varepsilon (1+\ln \lambda)s^{p+1}, \quad \forall s>0, \ \forall \lambda \geq 1.$$
\end{lem}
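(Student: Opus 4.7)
The plan is to compute $G_\lambda$ in closed form by direct integration of $g_\lambda$ and then reduce the bound to standard logarithmic inequalities. Since $g_\lambda'(s)=\ln(1+\lambda s)$ and $g_\lambda(0)=0$, integration by parts gives
\[
G_\lambda(s)=\frac{(1+\lambda s)^2\ln(1+\lambda s)}{2\lambda^2}-\frac{s}{2\lambda}-\frac{3s^2}{4},\qquad s\geq 0.
\]
Setting $y:=\lambda s$ rewrites this as $G_\lambda(s)=H(\lambda s)/\lambda^2$ with $H(y):=\tfrac{1}{2}(1+y)^2\ln(1+y)-\tfrac{y}{2}-\tfrac{3y^2}{4}$. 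A Taylor expansion at $y=0$ yields $H(y)=y^3/6+O(y^4)$, while $H(y)\sim y^2\ln y/2$ as $y\to\infty$; combining these asymptotics produces an absolute constant $C_0>0$ with $H(y)\leq C_0\,y^2\ln(1+y)$ for every $y\geq 0$, and hence the pointwise bound
\[
G_\lambda(s)\leq C_0\,s^2\ln(1+\lambda s).
\]

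Next I would combine two elementary inequalities. First, $(1+\lambda)(1+s)\geq 1+\lambda s$ gives $\ln(1+\lambda s)\leq\ln(1+\lambda)+\ln(1+s)\leq(1+\ln\lambda)+\ln(1+s)$ for $\lambda\geq 1$. Second, for every $\eta>0$ and every $p>1$ there exists $C_{\eta,p}>0$ such that
\[
\ln(1+t)\leq\eta+C_{\eta,p}\,t^{p-1}\qquad\text{for all }t\geq 0,
\]
as one sees by separating $t\leq e^\eta-1$ from $t>e^\eta-1$ and using that $\ln(1+t)/t^{p-1}\to 0$ at infinity. Applied with $t=s$ and $\eta=\varepsilon/(2C_0)$ this gives $C_0\,s^2\ln(1+s)\leq\tfrac{\varepsilon}{2}s^2+\tilde C_\varepsilon\,s^{p+1}$, and combining with the previous step leads to a preliminary estimate
\[
G_\lambda(s)\leq C_0(1+\ln\lambda)s^2+\tfrac{\varepsilon}{2}s^2+\tilde C_\varepsilon\,s^{p+1}.
\]

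The final and most delicate step is to convert the residual $C_0(1+\ln\lambda)s^2$ contribution into the target shape $\tfrac{\varepsilon}{2}s^2+C_\varepsilon(1+\ln\lambda)s^{p+1}$. I would accomplish this by partitioning the range of $s$ through an $\varepsilon$-dependent threshold $\delta_\varepsilon$ on $\lambda s$. When $\lambda s\leq\delta_\varepsilon$ the sharper small-argument bound $H(y)\leq C_1y^3$ (valid for $y\leq 1$) gives $G_\lambda(s)\leq C_1\lambda s^3=C_1(\lambda s)s^2\leq\tfrac{\varepsilon}{2}s^2$ as soon as $\delta_\varepsilon$ is chosen sufficiently small, so the $s^2$ term alone controls $G_\lambda$ in this regime; in the complementary range $\lambda s\geq\delta_\varepsilon$ one has $s\geq\delta_\varepsilon/\lambda$, and this lower bound on $s$ lets one trade a factor of $s^2$ for a multiple of $s^{p+1}$ at the cost of a constant that is compensated by the $(1+\ln\lambda)$ weight on the right. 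The main obstacle will be making this final absorption uniform across the transition between the two regimes and tracking the dependence of $C_\varepsilon$ on $\varepsilon$ and $p$ so that a single constant works for all $s>0$ and $\lambda\geq 1$.
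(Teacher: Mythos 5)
Your first two steps are correct and in fact sharper than what the paper records: the closed form for $G_\lambda$, the scaling $G_\lambda(s)=H(\lambda s)/\lambda^2$, the bound $H(y)\le C_0y^2\ln(1+y)$ and hence $G_\lambda(s)\le C_0 s^2\ln(1+\lambda s)$ are all valid, as is the splitting $\ln(1+\lambda s)\le(1+\ln\lambda)+\ln(1+s)$ and the absorption of the $\ln(1+s)$ contribution into $\tfrac{\varepsilon}{2}s^2+\tilde C_\varepsilon s^{p+1}$. The gap is exactly where you flag it, and it cannot be closed. In the regime $\lambda s\ge\delta_\varepsilon$ the only information available is $s\ge\delta_\varepsilon/\lambda$, which gives $s^2\le(\lambda/\delta_\varepsilon)^{p-1}s^{p+1}$; the residual term $C_0(1+\ln\lambda)s^2$ then becomes $C_0\delta_\varepsilon^{1-p}(1+\ln\lambda)\lambda^{p-1}s^{p+1}$, and the factor $\lambda^{p-1}$ is polynomial in $\lambda$ --- it is not ``compensated by the $(1+\ln\lambda)$ weight.'' No choice of threshold repairs this, because the asserted inequality actually fails in the intermediate regime $s\to0$, $\lambda s\to\infty$: take $s=\lambda^{-1/2}$. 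Your own estimate is an equality up to constants there, and a direct computation from the closed form gives $G_\lambda(\lambda^{-1/2})=\frac{\ln\lambda-3}{4\lambda}\bigl(1+o(1)\bigr)\sim\tfrac14 s^2\ln\lambda$, while $\varepsilon s^2+C_\varepsilon(1+\ln\lambda)s^{p+1}=s^2\bigl(\varepsilon+C_\varepsilon(1+\ln\lambda)\lambda^{-(p-1)/2}\bigr)=s^2\bigl(\varepsilon+o(1)\bigr)$ as $\lambda\to\infty$ since $p>1$. Hence no constant $C_\varepsilon$ independent of $\lambda$ can work, and the inequality as stated fails for every $p>1$.

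For comparison, the paper's own proof reduces the claim to $g_\lambda(s)\le\varepsilon s+C_\varepsilon(1+\ln\lambda)s^p$ and hides the same difficulty in the assertion \eqref{us} that $g_\lambda(s)/s\to0$ as $s\to0$ \emph{uniformly in} $\lambda$, which is used to produce a threshold $s_0$ independent of $\lambda$. That uniformity does not hold: writing $g_\lambda(s)/s=\phi(\lambda s)$ with $\phi(y)=\frac{(1+y)\ln(1+y)-y}{y}$ increasing and unbounded, one has $g_\lambda(1/\lambda)/(1/\lambda)=2\ln 2-1$ for every $\lambda$, so $\sup_{\lambda>0}g_\lambda(s)/s=+\infty$ for each fixed $s>0$. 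In other words, your proof and the paper's break at the same point and for the same reason, and your instinct to single out the absorption of the $(1+\ln\lambda)s^2$ term as the delicate issue is exactly right. What your (correct) partial argument does establish is the weaker estimate $G_\lambda(s)\le C_0(1+\ln\lambda)s^2+\varepsilon s^2+C_\varepsilon s^{p+1}$, i.e.\ the quadratic term must carry a $\lambda$-dependent coefficient; any salvage of the lemma (and of its use in Proposition \ref{p1}) has to be formulated in that form rather than the one stated.
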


\begin{proof}
It is enough to show that given $\varepsilon>0$ there exists $C_\varepsilon>0$ such that 
$$g_\lambda(s)\leq \varepsilon s+C_\varepsilon (1+\ln \lambda)s^p, \quad \forall s>0, \ \forall \lambda \geq 1.$$
From \eqref{us} there exists $s_0>0$ such that $$g_\lambda(s)\leq\varepsilon s, \ \forall s \in [0,s_0], \ \forall \lambda>0.$$
Now, if $\lambda\geq 1$ then, $\forall s >0$
\begin{eqnarray}
g_\lambda(s)&\leq&\frac{1}{\lambda} \left[ \lambda(1+s)\ln (\lambda(1+s))\right]= (\ln \lambda )(1+s)+(1+s)\ln(1+s) \nonumber\\
&\leq & (1+\ln \lambda)\left[(1+s)+(1+s)\ln(1+s)\right].
\end{eqnarray}
Since $$\frac{(1+s)+(1+s)\ln(1+s)}{s^p} \rightarrow 0 \quad \text{ as } \quad s \to \infty,$$ there exists $s_1>s_0$ such that 
$$(1+s)+(1+s)\ln(1+s)\leq \varepsilon s^p, \quad \forall s\geq s_1.$$
Finally, by continuity, there exists $C_\varepsilon>0$ such that
$$(1+s)+(1+s)\ln(1+s)\leq C_\varepsilon s^p, \quad \forall s\in [s_0,s_1].$$
Taking $C_\varepsilon$ sufficiently large, we get
$$g_\lambda(s)\leq \varepsilon s+C_\varepsilon (1+\ln \lambda)s^p, \quad \forall s>0, \ \forall \lambda \geq 1.$$
\end{proof}


\begin{lem}
\label{l1}
Let $V \in L^{q}(\Omega)$, with $q>\frac{N}{2}$. If $\lambda_1(V)>0$ then there exists $K_1>0$ such that 
\begin{equation}
\label{co}
\int_\Omega \left( |\nabla v|^2+V(x)(v^+)^2\right)\geq K_1 \|v\|^2 \quad \forall v \in H_0^1(\Omega).
\end{equation}
\end{lem}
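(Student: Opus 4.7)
The plan is to reduce Lemma \ref{l1} to the following auxiliary coercivity estimate: if $\lambda_1(V) > 0$ then there exists $\epsilon > 0$ such that
\[
\int_\Omega \bigl(|\nabla u|^2 + V(x) u^2\bigr) \geq \epsilon \, \|u\|^2 \quad \forall u \in H_0^1(\Omega). \qquad (\ast)
\]
Granting $(\ast)$, the lemma is immediate. Indeed, decomposing $v = v^+ - v^-$, the gradients $\nabla v^+$ and $\nabla v^-$ have essentially disjoint supports, so $\|v\|^2 = \|v^+\|^2 + \|v^-\|^2$. Applying $(\ast)$ with $u = v^+ \in H_0^1(\Omega)$ yields
\[
\int_\Omega \bigl(|\nabla v|^2 + V(x)(v^+)^2\bigr) = \|v^-\|^2 + \int_\Omega \bigl(|\nabla v^+|^2 + V(x)(v^+)^2\bigr) \geq \|v^-\|^2 + \epsilon \, \|v^+\|^2,
\]
which is bounded below by $\min(1,\epsilon) \|v\|^2$, so one may take $K_1 = \min(1, \epsilon)$.

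To prove $(\ast)$ I would argue by contradiction. Suppose there exists $(u_n) \subset H_0^1(\Omega)$ with $\|u_n\| = 1$ and $\int_\Omega (|\nabla u_n|^2 + V u_n^2) \to 0$; equivalently $\int_\Omega V u_n^2 \to -1$. Passing to a subsequence, $u_n \rightharpoonup u$ in $H_0^1(\Omega)$. The condition $q > N/2$ is equivalent to $2q' < 2^*$, so by the Rellich--Kondrachov theorem $u_n \to u$ strongly in $L^{2q'}(\Omega)$. Combined with $V \in L^q(\Omega)$ and H\"older's inequality this gives $\int_\Omega V u_n^2 \to \int_\Omega V u^2$, so $\int_\Omega V u^2 = -1$ and in particular $u \not\equiv 0$. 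Weak lower semicontinuity of the gradient norm yields $\int_\Omega |\nabla u|^2 \leq 1$, whence $\int_\Omega (|\nabla u|^2 + V u^2) \leq 0$, contradicting the variational characterization $\int_\Omega (|\nabla u|^2 + V u^2) \geq \lambda_1(V) \|u\|_2^2 > 0$.

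The main obstacle is the compactness step in the contradiction argument: one must ensure that the indefinite term $\int_\Omega V u_n^2$ passes to the limit under mere weak convergence in $H_0^1(\Omega)$. This is exactly where the hypothesis $q > N/2$ enters, forcing the embedding exponent $2q'$ to stay strictly below $2^*$ and thus the corresponding Sobolev embedding to be compact. The remaining ingredients---the orthogonality of $\nabla v^+$ and $\nabla v^-$, and the Rayleigh characterization of $\lambda_1(V)$---are standard.
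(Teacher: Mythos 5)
Your proof is correct and follows essentially the same route as the paper: first establish the coercivity estimate $\int_\Omega\left(|\nabla u|^2+V(x)u^2\right)\geq \epsilon\|u\|^2$ by a contradiction/compactness argument exploiting that $q>\frac{N}{2}$ makes the embedding into $L^{2q'}(\Omega)$ compact, then pass to $v^+$ via the decomposition $\|v\|^2=\|v^+\|^2+\|v^-\|^2$. The only (harmless) imprecision is that the negation of $(\ast)$ yields a sequence with $\limsup_n\int_\Omega\left(|\nabla u_n|^2+V(x)u_n^2\right)\leq 0$ rather than a limit equal to $0$; after extracting a subsequence along which this quantity converges to some $m\leq 0$, the same computation gives $\int_\Omega V(x)u^2=m-1<0$ (so $u\not\equiv 0$) and $\int_\Omega\left(|\nabla u|^2+V(x)u^2\right)\leq m\leq 0$, and the contradiction with $\lambda_1(V)>0$ persists.
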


\begin{proof}
Let us first prove that there exists a constant $K_1>0$ such that 
\begin{equation}
\label{i1}
\int _\Omega\left( |\nabla v|^2+V(x)v^2\right)\geq K_1 \|v\|^2 \quad \forall v \in H_0^1(\Omega).
\end{equation}  Indeed, assume by contradiction that there is a sequence $(v_n) \subset H_0^1(\Omega)$ such that
$$\int_{\Omega}\left( |\nabla v_n|^2+V(x)(v_n)^2\right) \leq \frac{\|v_n\|^2}{n}.$$
Setting $w_n=\frac{v_n}{\|v_n\|}$ we may assume that, up to a subsequence, 
$$w_n \rightharpoonup w_0 \mbox{ in }  H_0^1(\Omega) \quad \mbox{ and } \quad w_n \rightarrow w_0 \mbox{ in } L^r(\Omega) \mbox{ for } r \in [1,2^*).$$
 In particular since $q > \frac{N}{2}$ we have that $w_n \rightarrow w_0$ in $L^{2q'}(\Omega)$. Thus from
\begin{equation}\label{x1}
\int_{\Omega}\left( |\nabla w_n|^2+V(x)(w_n)^2\right) \leq \frac{1}{n}
\end{equation}
it follows that 
\begin{equation}\label{x2}\int_{\Omega}\left( |\nabla w_0|^2+V(x)(w_0)^2\right)\leq 0.\end{equation} We claim that $w_0 \not \equiv 0$. Indeed, if $w_0 \equiv 0$ then $w_n \rightarrow 0$ in $L^{2q'}(\Omega)$ and \eqref{x1} yields $w_n \rightarrow 0$ in $H_0^1(\Omega)$, which is impossible since $\|w_n\|=1$. Hence $w_0 \not \equiv 0$ and consequently 
\eqref{x2} provides $\lambda_1(V)\leq 0$, which contradicts our assumption. Thus \eqref{i1} is proved.
Finally, we may assume that $K_1\leq 1$, so that
\begin{eqnarray*}
\int_{\Omega} \left( |\nabla v|^2+V(x)(v^+)^2\right)&=&\int_{\Omega} |\nabla v^-|^2 + \int_{\Omega}\left( |\nabla v^+|^2+V(x)(v^+)^2\right)\\ &\geq & \|v^-\|^2 + K_1\|v^+\|^2 \geq K_1\|v\|^2.
\end{eqnarray*}
\end{proof}


We are now ready to prove that $I$ has the appropriate geometry when $\lambda$ is large enough:
\begin{prop}
\label{p1}
Assume that $\lambda_1(-c-\mu f)>0$. There exists $\lambda_0>0$ such that if $\lambda\geq \lambda_0$ then, for some $M_\lambda,R_\lambda>0$, there holds $I(v)\geq  M_\lambda$ if $\|v\|=R_\lambda$. 
\end{prop}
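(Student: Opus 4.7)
The plan is to derive a three-term lower bound of the form
$$I(v) \geq A\|v\|^2 - B(1+\ln\lambda)\|v\|^{p+1} - \frac{\mu D}{\lambda}\|v\|,$$
valid for all $v \in H_0^1(\Omega)$, with constants $A, B, D>0$ and some exponent $p>1$ independent of $\lambda$, and then to select $R_\lambda$ so that the right-hand side at $\|v\|=R_\lambda$ stays positive and bounded below.

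For the quadratic part I will apply Lemma \ref{l1} with $V = -c-\mu f$, which is allowed since $\lambda_1(-c-\mu f)>0$, to obtain $\frac{1}{2}\int[|\nabla v|^2-(c+\mu f)(v^+)^2] \geq \frac{K_1}{2}\|v\|^2$. For the superquadratic part I will use that $G_\lambda(v^+)\geq 0$ to drop the $c^-$ contribution and reduce to estimating $\int c^+ G_\lambda(v^+)$. I will choose $p>1$ with $(p+1)q'\leq 2^*$, which is possible because $q>\frac{N}{2}$ forces $q'<\frac{N}{N-2}$ and hence $2^*/q'>2$. Lemma \ref{lg}, combined with H\"older (with exponents $q,q'$) and the Sobolev embeddings $H_0^1\hookrightarrow L^{2q'},L^{(p+1)q'}$, then yields
$$\int_\Omega c^+\, G_\lambda(v^+) \leq \varepsilon A_1 \|v\|^2 + C_\varepsilon(1+\ln\lambda)\, A_2 \|v\|^{p+1},$$
and picking $\varepsilon$ small enough that $\varepsilon A_1\leq K_1/4$ absorbs the first piece into the quadratic part. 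The linear term is handled by H\"older and Sobolev in the standard way, $\bigl|\tfrac{\mu}{\lambda}\int fv\bigr|\leq \frac{\mu D}{\lambda}\|v\|$, which together with the above gives the announced bound with $A=K_1/4$.

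The decisive step is the choice $R_\lambda:=\bigl(\tfrac{K_1}{8B(1+\ln\lambda)}\bigr)^{1/(p-1)}$, for which the middle term on the sphere $\|v\|=R_\lambda$ equals $\tfrac{K_1}{8}R_\lambda^2$ exactly, leaving
$$I(v) \geq R_\lambda\Bigl(\tfrac{K_1 R_\lambda}{8} - \tfrac{\mu D}{\lambda}\Bigr) \quad \text{for } \|v\|=R_\lambda.$$
The key observation is that $R_\lambda$ decays only polylogarithmically in $\lambda$ while the linear-remainder coefficient decays like $1/\lambda$; consequently $R_\lambda\lambda\to\infty$ as $\lambda\to\infty$. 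Hence there is $\lambda_0\geq 1$ such that $\tfrac{K_1 R_\lambda}{8}\geq \tfrac{2\mu D}{\lambda}$ for all $\lambda\geq\lambda_0$, and $M_\lambda:=\tfrac{K_1 R_\lambda^2}{16}$ does the job.

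The main obstacle is precisely the tension in this last step: the $\ln\lambda$ factor inherited from Lemma \ref{lg} forces $R_\lambda$ to shrink with $\lambda$, while the $1/\lambda$ decay of the linear term requires it not to shrink too fast. This is exactly where the slow logarithmic growth of $G_\lambda$ in the parameter $\lambda$ afforded by the change of variable \eqref{cv} becomes essential -- any algebraic dependence on $\lambda$ in Lemma \ref{lg} could leave $R_\lambda\lambda$ bounded and kill the argument.
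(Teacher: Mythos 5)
Your proposal is correct and follows essentially the same route as the paper: Lemma \ref{l1} for the coercive quadratic part, Lemma \ref{lg} with H\"older/Sobolev (choosing $p>1$ with $(p+1)q'\leq 2^*$) for the superquadratic part, and the $1/\lambda$ decay of the linear term to win on a suitably chosen sphere. The only difference is cosmetic: you pick $R_\lambda\sim(1+\ln\lambda)^{-1/(p-1)}$ to balance the middle term exactly, whereas the paper takes $R_\lambda=\lambda^{-\theta}$ with $\theta\in(0,1)$ and lets both error terms vanish; both choices exploit the same point that the logarithmic growth in $\lambda$ from Lemma \ref{lg} is beaten by the $1/\lambda$ gain in the source term.
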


\begin{proof}
Since $\lambda_1(-c-\mu f)>0$, by Lemma \ref{l1} there exists $K_1>0$ such that $$\int_{\Omega}\left( |\nabla v|^2-[c(x)+\mu f(x)](v^+)^2\right)\geq K_1 \|v\|^2 \quad \forall v \in H_0^1(\Omega).$$
Let $p>1$, close to $1$, satisfy $(p+1)q' < 2^*$. This choice of $p$ is possible since $q > \frac{N}{2}$. By Lemma \ref{lg}, given $\varepsilon>0$ there is $C_\varepsilon>0$ such that for every $\lambda\geq 1$ we have
$$I(v)\geq K_1\|v\|^2-C_1\varepsilon \|v\|^2 -C_\varepsilon (1+\ln \lambda)\|v\|^{p+1} -\frac{\|\mu f\|_q}{\lambda}\|v\|,$$
for some $C_1>0$.
Hence $$I(v)\geq (K_1-C_1\varepsilon)R^2-C_\varepsilon (1+\ln \lambda)R^{p+1}-\frac{\|\mu f\|_q}{\lambda}R$$
if $\|v\|=R$.

We fix $\varepsilon>0$ such that $K_2:=K_1-C_1 \varepsilon>0$ and set $R=\lambda^{-\theta}$ with $\theta \in (0,1)$.
Then
$$I(v)\geq \lambda^{-2\theta}\left(K_2-C_\varepsilon (1+\ln \lambda)\lambda^{\theta(1-p)}-\lambda^{\theta-1}\|\mu f\|_q\right).$$
Since $p>1>\theta>0$, we have $$(1+\ln \lambda)\lambda^{\theta(1-p)} \rightarrow 0 \quad \text{and} \quad \lambda^{\theta-1} \rightarrow 0 \quad \text{as} \quad \lambda \to \infty.$$
Thus $$I(v)\geq M_\lambda>0$$ if $\lambda$ is sufficiently large and $\|v\|=R_\lambda:=\lambda^{-\theta}$.
\end{proof}

\medskip

\section{The Palais-Smale condition}
\medskip

\noindent From now on, we fix $\lambda = \lambda_0$ where $\lambda_0$ is given by Proposition \ref{p1}. We set 
$$\alpha_c=\inf\left\{ \int_{\Omega} \left(|\nabla u|^2 -\mu f(x) (u^+)^2\right); u\in H_0^1(\Omega), \ \|u\|_2=1, \ cu^+ \equiv 0\right\}.$$
In the next proposition, we shall use an explicit expression of $G_\lambda$, namely,
\begin{eqnarray}
\label{eg}
\nonumber G_\lambda(s)&=&\frac{1}{4\lambda^2}\left[(1+\lambda s)^2\left(2\ln(1+\lambda s)-1\right)+1\right]-\frac{s^2}{2}
\\
\nonumber &=&\frac{1}{\lambda^2}\left[ \frac{(\lambda s)^2}{2} \ln(1+\lambda s)- \frac{(\lambda s)^2}{4} -\frac{\lambda s}{2} +\frac{1}{2} \ln(1+ \lambda s)+\lambda s\ln(1+\lambda s)\right] -\frac{s^2}{2}\\
&=&\frac{s^2}{2}\ln(1+\lambda s) -\frac{s^2}{4} -\frac{s}{2\lambda} +\frac{1}{2\lambda^2}\ln(1+\lambda s)  +\frac{s}{\lambda} \ln(1+\lambda s)  -\frac{s^2}{2}
\end{eqnarray}
for $s>0$.

\begin{prop}
\label{cerami}
If $\alpha_c>0$ then $I$ satisfies the Palais-Smale condition.
\end{prop}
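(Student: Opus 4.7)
The plan is to establish the boundedness of a Palais--Smale sequence $(v_n) \subset H^1_0(\Omega)$---the only nontrivial point---after which standard arguments (coercivity from Lemma \ref{l1} plus compactness of the subcritical nonlinearity $c\,g_\lambda(v^+)$) produce a strongly convergent subsequence.

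Testing $I'(v_n)$ against $-v_n^-$ gives $\|v_n^-\| \to 0$ at once, using $f \geq 0$ and $g_\lambda(v_n^+)v_n^- \equiv 0$. For boundedness, suppose for contradiction that $t_n := \|v_n\| \to \infty$ and set $w_n := v_n/t_n$. Along a subsequence, $w_n \rightharpoonup w \geq 0$ in $H^1_0$, and $w_n \to w$ in $L^r$ for every $r < 2^*$ (in particular $L^{2q'}$ since $q > N/2$).

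I will then show $c\,w \equiv 0$ a.e., treating $\Omega_+:=\{c>0\}$ and $\Omega_-:=\{c<0\}$ separately. On $\Omega_+$: for $\varphi \in C_c^\infty(\Omega_+)$, $\varphi \geq 0$, dividing $\langle I'(v_n), \varphi \rangle = o(1)$ by $t_n$ keeps the linear terms bounded while
$\tfrac{1}{t_n}\int c\, g_\lambda(v_n^+)\varphi = \int c\,\varphi\,w_n^+\,g_\lambda(v_n^+)/v_n^+$ has nonnegative integrand that by \eqref{superlinear} and Fatou's lemma diverges unless $w\equiv 0$ on $\text{supp}\,\varphi$; exhausting gives $w \equiv 0$ on $\Omega_+$. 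On $\Omega_-$: combining $2I(v_n) - \langle I'(v_n), v_n\rangle = O(1) + o(t_n)$ with $g_\lambda(v_n^+)v_n = g_\lambda(v_n^+)v_n^+$ yields
\[
\int_\Omega c(x)\,H_\lambda(v_n^+)\,dx = O(t_n), \qquad H_\lambda(s) := g_\lambda(s)\,s - 2G_\lambda(s).
\]
A direct computation from \eqref{eg} gives $H_\lambda(s) = \tfrac{s^2}{2} + \tfrac{s}{\lambda} - \tfrac{s\ln(1+\lambda s)}{\lambda} - \tfrac{\ln(1+\lambda s)}{\lambda^2}$, so that $H_\lambda \geq 0$ on $[0,\infty)$ (convex with $H_\lambda(0)=H_\lambda'(0)=0$), $H_\lambda(s)/s^2$ is uniformly bounded, and $H_\lambda(s)/s^2 \to 1/2$ as $s \to \infty$. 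Thus $c(x)H_\lambda(v_n^+)/t_n^2 \to \tfrac12 c(x)w^2(x)$ pointwise a.e., dominated by $C|c|(w_n^+)^2$ which is equi-integrable by the strong $L^{2q'}$ convergence of $w_n^+$; Vitali's theorem produces $\int c\,w^2 = 0$, and combined with the vanishing on $\Omega_+$ this forces $w \equiv 0$ on $\Omega_-$ as well.

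Finally, testing $\langle I'(v_n), w\rangle = o(1)$, using $c\,w \equiv 0$ to kill $\int c\,g_\lambda(v_n^+)\,w$, and dividing by $t_n$ yields in the limit
\[
\int_\Omega |\nabla w|^2 - \mu \int_\Omega f\,w^2 = 0.
\]
If $w \not\equiv 0$, then $w/\|w\|_2$ is admissible for $\alpha_c$, giving $\alpha_c\|w\|_2^2 \leq 0$, contradicting $\alpha_c > 0$. If $w \equiv 0$, then $w_n \to 0$ in every $L^r$ with $r < 2^*$, so $\int(c+\mu f)(w_n^+)^2 \to 0$, and Lemma \ref{l1} together with $2I(v_n) = O(1)$ forces $\int c\,G_\lambda(v_n^+) \geq (K_1/2+o(1))t_n^2$; on the other hand, Lemma \ref{lg} applied with $p>1$ close to $1$ so that $(p+1)q' < 2^*$, together with interpolation $\|w_n\|_{(p+1)q'} \leq C\|w_n\|_2^{1-\theta}\|w_n\|_{2^*}^{\theta}$ (using the Sobolev bound $\|w_n\|_{2^*} \leq C$ and the vanishing $\|w_n\|_2 \to 0$), bounds $\int c^+ G_\lambda(v_n^+) \geq \int c\,G_\lambda(v_n^+)$ by $o(t_n^2)$, giving the desired contradiction. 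The hardest part will be this last balance, where the slow logarithmic superlinearity of $g_\lambda$ is just compensated by the subcritical interpolation gain, and where both $\lambda_1(-c-\mu f) > 0$ and $\alpha_c > 0$ play essential roles.
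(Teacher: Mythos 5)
Your overall skeleton coincides with the paper's: normalize $w_n=v_n/t_n$, show $cw\equiv 0$ for the weak limit $w$, deduce $\int_\Omega|\nabla w|^2-\mu\int_\Omega f w^2=0$, invoke $\alpha_c>0$ to force $w\equiv 0$, and then seek a contradiction from $w\equiv 0$. Your treatment of $\Omega_-$ via the identity $2I(v_n)-\langle I'(v_n),v_n\rangle$, the explicit formula for $H_\lambda$ and Vitali's theorem is a sound (and arguably cleaner) variant of the paper's Fatou argument. The problem is the last step, which is precisely where the real difficulty of the proposition lies, and your argument there does not close. The claimed bound $\int_\Omega c\,G_\lambda(v_n^+)=o(t_n^2)$ via Lemma \ref{lg} plus interpolation fails: for any fixed $p>1$ the estimate reads
$$\int_\Omega c^+G_\lambda(v_n^+)\ \leq\ \varepsilon\,\|c\|_q\,t_n^2\|w_n^+\|_{2q'}^2+C_\varepsilon(1+\ln\lambda)\,\|c\|_q\,t_n^{p+1}\|w_n^+\|_{(p+1)q'}^{p+1},$$
and since $p+1>2$ the second term is $t_n^{2}\cdot t_n^{p-1}\cdot o(1)$, an indeterminate product --- you have no rate of decay for $\|w_n^+\|_{(p+1)q'}$, and interpolation against the bounded $L^{2^*}$ norm only converts this into the equally unknown rate of $\|w_n\|_2\to 0$. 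The obstruction is intrinsic: $G_\lambda(s)\sim\tfrac{s^2}{2}\ln(1+\lambda s)$, so the quantity you must control is $\ln(t_n)\int_\Omega c^+(w_n^+)^2$, a $0\cdot\infty$ form that no polynomial-type growth lemma can resolve.

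The paper's proof is built exactly around this point: after testing with $u_n$ and splitting the logarithm, the whole argument reduces to showing $\ln(\|u_n\|)\int_\Omega c(v_n^+)^2\to 0$ (its claim \eqref{ee6}), and this is obtained from the identity $I(u_n)-\tfrac12\langle I'(u_n),u_n\rangle$ together with the explicit formula \eqref{eee6} for $H_\lambda$, which yields $\int_\Omega c(u_n^+)^2=O(t_n^{1+\delta})+O(t_n)$ and hence kills the logarithm after division by $t_n^2$. Ironically, you already derived and used this very $H_\lambda$ identity in your $\Omega_-$ step, but not where it is indispensable. Two smaller remarks: your appeal to Lemma \ref{l1} presupposes $\lambda_1(-c-\mu f)>0$, which is not among the hypotheses of the proposition (only $\alpha_c>0$ is assumed); it is also unnecessary, since $\int_\Omega(c+\mu f)(w_n^+)^2\to 0$ already gives $\int_\Omega\bigl[|\nabla v_n|^2-(c+\mu f)(v_n^+)^2\bigr]=(1+o(1))t_n^2$. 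And test functions in $C_c^\infty(\Omega_+)$ are problematic since $\Omega_+=\{c>0\}$ is only measurable, not open; this part needs to be reformulated (e.g., as in the paper, or by deducing $w\equiv 0$ on $\Omega_+$ from the same $H_\lambda$/Vitali computation after first handling the sign of $\int_\Omega c\,H_\lambda(v_n^+)$).
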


\begin{proof}
Let $(u_n)$ be a Palais-Smale sequence for $I$ at the level $d \in \R$, i.e.
\begin{equation}\label{Cer}
I(u_n)\rightarrow d \quad \text{ and } \quad  \|I'(u_n)\|_*\rightarrow 0.
\end{equation} 
From (\ref{Cer}) we have
\begin{equation}\label{e2}
\frac{1}{2}\int_{\Omega} \left[|\nabla u_n|^2 -(c(x)+\mu f(x)) (u_n^+)^2\right]- \int_{\Omega}c(x)G_\lambda(u_n^+)  -\frac{\mu}{\lambda}\int_{\Omega} f(x)u_n= d+o(1)
\end{equation}
and  \begin{equation}\label{e3}\left|\int_{\Omega} \left[\nabla u_n \nabla \varphi -(c(x)+\mu f(x))u_n^+ \varphi\right]  - \int_{\Omega}c(x)g_\lambda(u_n^+)\varphi  -\frac{\mu}{\lambda} \int_{\Omega} f(x)\varphi \right|
\leq \varepsilon_n \|\varphi\|
\end{equation}
for some sequence $\varepsilon_n \rightarrow 0$ and for every $\varphi \in H_0^1(\Om)$.
In particular, we have
\begin{equation}\label{ceramieffect}
\left| \langle I'(u_n), u_n \rangle \right|\leq \varepsilon_n \|u_n\|.
\end{equation}
Let us assume that $\|u_n\|\rightarrow \infty$ and set $v_n=\frac{u_n}{\|u_n\|}$. Up to a subsequence,
we have $$v_n \rightharpoonup v_0 \text{ in } H_0^1(\Om), \quad v_n \rightarrow v_0 \text{ in } L^r(\Om),\ \forall r \in [1,2^*), \quad \text{ and }\quad 
v_n \rightarrow v_0 \text{ {\it a.e.} in }\Om.$$ We claim that $cv_0^+\equiv 0$.
Indeed, from \eqref{e3}  we have, using the convergences above,

\begin{equation}\label{e4} \int_{\Omega}c(x)\frac{g_\lambda(u_n^+)}{\|u_n\|}\varphi =\int_{\Omega} \left[\nabla v_0 \nabla \varphi -(c(x)+\mu f(x))v_0^+ \varphi\right] +o(1) < \infty, \end{equation}
for every $\varphi \in H_0^1(\Om)$.
If $cv_0^+ \not \equiv 0$ then we may choose $\varphi \in H_0^1(\Om)$ and a measurable subset $\Om_\varphi \subset \Om$ such that 
$$|\Om_\varphi|>0, \quad cv_0^+\varphi >0 \text{ on } \Om_\varphi \subset \Om, \quad \text{and} \quad 
cv_0^+\varphi =0 \text{ on } \Om \setminus \Om_\varphi.$$
Now, from (\ref{superlinear}), we have 
$$\liminf c(x)\frac{g_\lambda(u_n^+)}{\|u_n\|}\varphi=\liminf c(x)v_n^+ \frac{g_\lambda(\|u_n\|v_n^+)}{\|u_n\|v_n^+} \varphi = + \infty \quad \text{on} \quad \Om_\varphi.$$
Fatou's lemma then yields a contradiction with \eqref{e4}. Therefore $cv_0^+ \equiv 0$, i.e. $v_0^+=0$ in $\Omega_+ \cup \Omega_-$.
On the other hand, taking $\varphi=v_0$ in \eqref{e3} and dividing it by $\|u_n\|$ we get
$$\int_{\Omega} \left[\nabla v_n \nabla v_0 -(c(x)+\mu f(x))v_n^+ v_0 \right] \rightarrow 0,$$
so that, using $v_n \rightharpoonup v_0$ in $H_0^1(\Omega)$ and $cv_0^+ \equiv 0$, we get
$$\int_{\Omega} \left[ |\nabla v_0|^2 -\mu f(x)(v_0^+)^2\right]=0.$$ Thus $v_0 \equiv 0$ (otherwise $\alpha_c\leq 0$). Now from \eqref{e3} we have, taking $\varphi = u_n$ and using the definition \eqref{gl} of $g_{\lambda}$,
\begin{equation}\label{ee5}
\left| \int_{\Omega} ( |\nabla u_n|^2 - \mu f(x))(u_n^+)^2 - \frac{1}{\lambda} \int_{\Omega} c(x) (1 + \lambda u_n^+) \ln (1 + \lambda u_n^+) u_n^+ - \frac{\mu}{\lambda}\int_{\Omega} f(x) u_n^+ \right| \leq \varepsilon_n \|u_n\|.
\end{equation}
Dividing by $||u_n||^2$ and using that $v_n \to 0$ in $L^r(\Omega),  \forall r \in [1, 2^*)$ we get
$$1 - \int_{\Omega} c(x) (v_n^+)^2 \ln (1 + \lambda ||u_n||v_n^+) \to 0.$$
Now, using the property $\ln(st) = \ln s + \ln t$, it follows that
$$ 1 - \ln (||u_n||) \int_{\Omega} c(x) (v_n^+)^2 - \int_{\Omega} c(x) (v_n^+)^2 \ln \Big(\lambda v_n^+ + \frac{1}{||u_n||}\Big) \to 0.$$
We claim that
\begin{equation}\label{ee6} \ln (||u_n||) \int_{\Omega}c(x) (v_n^+)^2 \rightarrow 0.
\end{equation}
In that case we would get
$$
\int_{\Omega}c(x) (v_n^+)^2 \ln \left(\lambda v_n^+ + \frac{1}{||u_n||}\right) \rightarrow 1,$$
which clearly contradicts the fact that $v_0 =0$. To prove \eqref{ee6} we define for every $s >0$
$$H_{\lambda}(s) = \frac{1}{2}g_{\lambda}(s) s - G_{\lambda}(s).$$
From \eqref{gl} and \eqref{eg} it follows that
\begin{equation}\label{eee6} H_\lambda(s) = \frac{s^2}{4}+\frac{s}{2\lambda}-\frac{s}{2\lambda}\ln (1+\lambda s)-\frac{1}{2\lambda^2}\ln (1+\lambda s) 
\end{equation} 
From \eqref{ceramieffect} we get
$$I(u_n) - \frac{1}{2}\langle I'(u_n), u_n \rangle =c+\varepsilon_n \|u_n\|+o(1),$$
which leads, using the definition of $H_{\lambda}$, to
\begin{equation}\label{ee7}
\int_{\Omega} c(x) H_{\lambda}(u_n^+) - \frac{1}{2}\int_{\Omega}f(x) u_n =c+\varepsilon_n \|u_n\|+o(1).
\end{equation}
Now, combining \eqref{eee6} and \eqref{ee7}, we obtain 
\begin{eqnarray*}
\frac{1}{4} \int_{\Omega} c(x) (u_n^+)^2 &=& c +\varepsilon_n \|u_n\|+ \frac{1}{2 \lambda} \int_{\Omega} c(x)  u_n^+ - \frac{1}{2 \lambda} \int_{\Omega} c(x) u_n^+ \ln (1+ \lambda u_n^+)  \\
&& + \frac{1}{2 \lambda^2 } \int_{\Omega} c(x) \ln (1+ \lambda u_n^+) + \frac{1}{2}\int_{\Omega} f(x) u_n + o(1).
\end{eqnarray*}
Hence
\begin{eqnarray*}
\ln (||u_n||) \int_{\Omega} c(x) (v_n^+)^2 &=& 4  \frac{\ln ||u_n||}{||u_n||^2}\Big( c + \varepsilon_n \|u_n\|+\frac{1}{2 \lambda} \int_{\Omega}c(x) u_n^+ - \frac{1}{2 \lambda} \int_{\Omega}c(x) u_n^+ \ln(1+ \lambda u_n^+) \\
&  & + \frac{1}{2 \lambda^2} \int_{\Omega} c(x) \ln (1 + \lambda u_n^+) + \frac{1}{2} \int_{\Omega} f(x) u_n + o(1) \Big) \rightarrow 0.
\end{eqnarray*}
Thus \eqref{ee6} is proved and we reach a contradiction. Therefore $(u_n)$ must be bounded and, up to subsequence, we have $u_n \rightharpoonup u_0$ in $H_0^1(\Omega)$ and $u_n \rightarrow u_0$ in $L^p(\Omega)$ for $p \in [1,2^*)$. At this point the strong convergence follows in a standard way. We refer to \cite[Lemma 11]{JS} for a proof.
\end{proof}

\begin{cor}
If $\lambda_1(-c-\mu f)>0$ then $I$ satisfies the Palais-Smale condition.
\end{cor}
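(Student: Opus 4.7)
The plan is to reduce this corollary to the previously established Proposition \ref{cerami} by showing that the hypothesis $\lambda_1(-c-\mu f) > 0$ implies $\alpha_c > 0$. Once this implication is proved, the conclusion follows immediately.

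To establish $\alpha_c > 0$, I would take an arbitrary admissible $u \in H_0^1(\Omega)$ in the infimum defining $\alpha_c$, that is, $\|u\|_2 = 1$ and $cu^+ \equiv 0$. The key observation is that the constraint $cu^+ \equiv 0$ gives $c(x)(u^+)^2 = 0$ almost everywhere, so
\begin{equation*}
\int_{\Omega} \left(|\nabla u|^2 - \mu f(x)(u^+)^2\right) = \int_{\Omega} \left(|\nabla u|^2 - (c(x) + \mu f(x))(u^+)^2\right).
\end{equation*}
Since $\lambda_1(-c-\mu f) > 0$, Lemma \ref{l1} applied with $V = -c - \mu f$ yields a constant $K_1 > 0$ such that the right-hand side is bounded below by $K_1 \|u\|^2$.

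It then remains to turn the lower bound $K_1 \|u\|^2$ into a bound that is uniform over admissible $u$. This is the only additional ingredient: by the Poincaré inequality, $\|u\|^2 = \|\nabla u\|_2^2 \geq \lambda_1(\Omega)\|u\|_2^2 = \lambda_1(\Omega)$ whenever $\|u\|_2 = 1$, where $\lambda_1(\Omega) > 0$ denotes the first Dirichlet eigenvalue of $-\Delta$ on $\Omega$. Thus
\begin{equation*}
\int_{\Omega} \left(|\nabla u|^2 - \mu f(x)(u^+)^2\right) \geq K_1 \lambda_1(\Omega) > 0
\end{equation*}
for every admissible $u$, so that $\alpha_c \geq K_1 \lambda_1(\Omega) > 0$, and Proposition \ref{cerami} concludes the proof.

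I do not expect any real obstacle here: the work was already done in Proposition \ref{cerami}, and the present statement simply replaces the somewhat opaque quantity $\alpha_c$ by the more natural spectral condition $\lambda_1(-c-\mu f) > 0$. The only minor point to be careful about is to exploit the constraint $cu^+\equiv 0$ to identify the functional under consideration with the eigenvalue-type quadratic form from Lemma \ref{l1}; everything else is just Poincaré.
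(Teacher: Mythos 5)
Your proposal is correct and follows essentially the same route as the paper: the constraint $cu^+\equiv 0$ identifies the quadratic form with the one controlled by Lemma \ref{l1}, and the resulting bound $K_1\|u\|^2$ is converted to a uniform positive lower bound via the embedding $H_0^1(\Omega)\subset L^2(\Omega)$ (the paper's Sobolev constant $S$ plays exactly the role of your $\lambda_1(\Omega)$ from Poincar\'e). No gaps.
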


\begin{proof}
Let $\|u\|_2=1$ with $cu^+ \equiv 0$. Since $\lambda_1(-c-\mu f)>0$, by Lemma \ref{l1} there is a constant $K_1>0$ such that
\begin{eqnarray*}
 \int_{\Omega} \left(|\nabla u|^2 -\mu f(x)(u^+)^2\right)&=&\int_{\Omega} \left(|\nabla u|^2 -(c(x)+\mu f(x))(u^+)^2\right)\\&\geq &K_1\|u\|^2\geq  SK_1\|u\|_2^2=SK_1>0,
\end{eqnarray*}
where $S$ is the best Sobolev constant for the embedding $H_0^1(\Omega) \subset L^2(\Omega)$. Thus $\alpha_c>0$ and by Proposition \ref{cerami} we get the conclusion.
\end{proof}

\medskip
\section{Proof of Theorem \ref{t1} and Lemma  \ref{optimal}}
\medskip

We are now ready to prove our main results.\\

\noindent {\it Proof of Theorem \ref{t1}:}
By Proposition \ref{p1} we know that if $\lambda_1(-c-\mu f)>0$  and $\lambda = \lambda_0$  then there are $M_\lambda,R_\lambda>0$ such that $I(v)\geq  M_\lambda$ if $\|v\|=R_\lambda$. Moreover, it easily seen that if $f \not \equiv 0$ then $I$ takes negative values in the ball $B(0, R_{\lambda})$.
Therefore, by weak lower semi-continuity, we infer that  the infimum of $I$ in $B(0, R_{\lambda})$ is achieved by some $v_1\not \equiv 0$, which is a critical point  of $I$. Furthermore, since  $G_{\lambda}(s)/s^2 \to \infty$ as $s \to \infty$, if $v$ is a smooth function supported in $\Omega_+$ then $I(tv)\rightarrow -\infty$ as $t \to \infty$. We fix $t>0$ and $v$ such that $v_0=tv$ satisfies $\|v_0\|>R_\lambda$ and $I(v_0)<0$. Now let
$$\Gamma:=\{\gamma \in \mathcal{C}([0,1],H_0^1(\Om)); \ \gamma(0)=0, \gamma(1)=v_0\}$$
and $$d:=\inf_{\gamma \in \Gamma} \max_{t \in [0,1]} I(\gamma(t)).$$
Since $I$ satisfies the Palais-Smale condition, by the mountain-pass theorem it is straightforward 
that $I$ has a critical point $w_1$, which, by Proposition \ref{p1}, satisfies $I(w_1) =d>0$. In particular, we have $v_1 \neq w_1$.  Finally, from Lemma \ref{pq}, we know that these two critical points provide two positive solutions of $(P)$.
\qed \\

\begin{proof}[Proof of Lemma \ref{optimal}]

By Lemma \ref{pq}, we know that if $u \geq 0$ is a solution of $(P)$ then $v \geq 0 $ given by \eqref{cv}  is a solution of $(Q)$. Taking $\phi >0$, the first positive eigenfunction associated to $\lambda_1(-c - \mu f)$, as test function and using the property that $g_{\lambda} \geq 0$ on $\R$ we obtain
$$\int_\Omega \left( \nabla u \nabla \phi -c(x)u\phi - \mu f(x) u \phi \right)= \int_\Omega \left(c(x) g_{\lambda}(v) \phi + \frac{\mu }{\lambda}f(x) \phi \right)>0,$$
so that
$$\lambda_1(-c - \mu f)\int_\Omega u \phi >0.$$
Thus  $\lambda_1(-c - \mu f) >0$ and this proves the first point. \medskip

Similarly, let $\varphi>0$ be an eigenfunction associated to $\lambda_1(-c)$ and  assume that $u\geq 0$ is a solution of $(P)$.  Taking $\varphi >0$ as test function we get
$$\int_\Omega \left( \nabla u \nabla \varphi -c(x)u\varphi\right)= \int_\Omega \left(\mu |\nabla u|^2 \varphi +f(x)\varphi\right)>0.$$
Thus $$\lambda_1(-c) \int_\Omega u\varphi>0,$$ so that $\lambda_1(-c)>0$. Finally, let $u$ be a solution of $(P)$. Using $u^-$ as test function in $(P)$, we obtain
$$ - \int_{\Omega} (|\nabla u^-|^2 - c(x)|u^-|^2) = \int_{\Omega} (\mu |\nabla u|^2 u^- + f(x)u^-) \geq 0.$$
Hence $$\int_{\Omega} (|\nabla u^-|^2 - c(x)|u^-|^2) \leq 0$$ and since $\lambda_1(-c) > 0$ we get $u^- \equiv 0$, i.e. $u \geq 0$.
\end{proof}

We end this paper by deriving a condition under which  $\lambda_1(-c - \mu f) >0$ holds. \medskip

Considering $c$ and $f$ as fixed, we may rewrite the condition $\lambda_1(-c-\mu f)>0$ as an explicit condition on $\mu$. To this end, we assume that $\lambda_1(-c)>0$ and set
\begin{equation}
\label{g1}\gamma_1(-c,f):=\displaystyle \inf\left\{ \int_{\Omega} \left(|\nabla u|^2-c(x)u^2\right);\ u \in H_0^1(\Omega),\ \int_\Omega fu^2=1\right\}.
\end{equation}
It is well-known, see \cite{MM}, that
$\gamma_1(-c,f)$ is the first eigenvalue of the problem
$$ \left\{
\begin{array}{lll}
-\Delta u -c(x)u=\gamma f(x)u& {\rm in } & \Omega,\\
          u  = 0    & {\rm on } & \partial \Omega.
\end{array}\right. 
$$

\begin{lem}
\label{lf}
Let $\mu>0$ and $f\gneqq 0$. Then $\lambda_1(-c-\mu f)>0$ if and only if $\lambda_1(-c)>0$ and $0<\mu<\gamma_1(-c,f)$.
\end{lem}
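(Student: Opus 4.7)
The plan is to prove each implication using the variational characterizations of $\lambda_1(-c-\mu f)$ and $\gamma_1(-c,f)$, combined with the existence and positivity of their respective first eigenfunctions (recalled in the excerpt and standard for the weighted problem, cf. \cite{MM}). Suppose first that $\lambda_1(-c-\mu f)>0$. Since $f\geq 0$, the monotonicity of $V\mapsto\lambda_1(V)$ immediately gives $\lambda_1(-c)\geq\lambda_1(-c-\mu f)>0$. In particular the weighted eigenvalue problem defining $\gamma_1(-c,f)$ is well posed and is attained by a positive eigenfunction $\phi_1\in H_0^1(\Omega)$ satisfying $\int_\Omega(|\nabla\phi_1|^2-c\phi_1^2)=\gamma_1(-c,f)\int_\Omega f\phi_1^2$. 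Plugging $\phi_1$ as a test function into the Rayleigh quotient for $\lambda_1(-c-\mu f)$ yields
$$0<\lambda_1(-c-\mu f)\,\|\phi_1\|_2^2\leq\int_\Omega(|\nabla\phi_1|^2-c\phi_1^2-\mu f\phi_1^2)=\bigl(\gamma_1(-c,f)-\mu\bigr)\int_\Omega f\phi_1^2.$$
Because $f\gneqq 0$ and $\phi_1>0$ force $\int_\Omega f\phi_1^2>0$, this gives $\mu<\gamma_1(-c,f)$.

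For the converse, I would argue by contradiction. Assume $\lambda_1(-c)>0$, $0<\mu<\gamma_1(-c,f)$ and $\lambda_1(-c-\mu f)\leq 0$, and let $\psi>0$ denote the positive first eigenfunction of $-\Delta-c-\mu f$. Testing this eigenvalue equation against $\psi$ gives $\int_\Omega(|\nabla\psi|^2-c\psi^2)\leq\mu\int_\Omega f\psi^2$. If $\int_\Omega f\psi^2=0$, the right-hand side vanishes and we obtain $\int_\Omega(|\nabla\psi|^2-c\psi^2)\leq 0$, contradicting $\lambda_1(-c)\|\psi\|_2^2>0$. If $\int_\Omega f\psi^2>0$, then dividing exhibits $\psi$ as a competitor in the variational problem defining $\gamma_1(-c,f)$ with Rayleigh quotient $\leq\mu$, contradicting $\gamma_1(-c,f)>\mu$.

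The main (mild) subtlety is securing the strict inequality $\mu<\gamma_1(-c,f)$ in the forward direction: it hinges on $\lambda_1(-c-\mu f)$ being strictly positive rather than merely nonnegative and on the strict positivity of $\phi_1$ (so that $f\gneqq 0$ propagates to $\int_\Omega f\phi_1^2>0$). Both ingredients follow from the simplicity and positivity of first eigenfunctions recalled earlier in the paper, and no additional machinery beyond the variational characterizations is needed.
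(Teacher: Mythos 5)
Your proof is correct and follows essentially the same route as the paper: both directions are handled by testing the first eigenfunction of one variational problem in the Rayleigh quotient of the other, using the positivity of the eigenfunctions and $f\gneqq 0$ to ensure $\int_\Omega f\phi^2>0$. The only cosmetic difference is that you argue the converse by contradiction with a case split on $\int_\Omega f\psi^2$, whereas the paper gives the same estimate directly.
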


\begin{proof}
Note first that since $\mu f\geq 0$ we have $$\lambda_1(-c)\geq \lambda_1(-c-\mu f).$$ It is also clear that if $\lambda_1(-c)>0$ then $\gamma_1(-c,f)>0$.
Hence, if $\lambda_1(-c-\mu f)>0$ we have $\lambda_1(-c)>0$ and $\gamma_1(-c,f)>0$. Moreover
$$\int_{\Omega}\left(|\nabla u|^2 - [c(x) + \mu f(x)] u^2\right)>0$$ for every  $u \not \equiv 0$. Thus, if $\int_\Omega fu^2=1$ then 
$$\mu <\int_{\Omega}\left(|\nabla u|^2 - c(x) u^2\right).$$
Since $\gamma_1(-c,f)$ is achieved we have $\mu <\gamma_1(-c,f)$.

Let us now assume that $\lambda_1(-c)>0$ and $0<\mu<\gamma_1(c,f)$. Since $f\gneqq 0$ and $\lambda_1(-c-\mu f)$ is achieved by some $\phi>0$, we have $\int_\Omega f\phi^2>0$ and consequently
$$\frac{\int_{\Omega}\left(|\nabla \phi|^2 - c(x)\phi^2\right)}{\int_\Omega f\phi^2}\geq \gamma_1(-c,f)>\mu$$
so that $$\lambda_1(-c-\mu f)=\int_{\Omega}\left(|\nabla \phi|^2 - [c(x) + \mu f(x)] \phi^2\right)>0.$$
\end{proof}

Theorem \ref{t1} can then be restated as follows:
\begin{cor}
\label{cor1}
Assume $(\mathcal{H})$, $c^+ \not \equiv 0$ and $\lambda_1(-c)>0$. Then $(P)$ has two positive solutions for $ 0< \mu<\gamma_1(-c,f)$.
\end{cor}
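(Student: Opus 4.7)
The plan is very direct: Corollary \ref{cor1} is merely a reformulation of Theorem \ref{t1} using the characterization provided by Lemma \ref{lf}. I would not attempt any new analysis; instead I would simply chain the two results together.

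First I would observe that the hypotheses $(\mathcal{H})$ and $c^+ \not\equiv 0$ appearing in the statement of the corollary are precisely the first two hypotheses of Theorem \ref{t1}. The only remaining hypothesis of Theorem \ref{t1} is the spectral condition $\lambda_1(-c-\mu f) > 0$. Thus the task reduces to checking that this spectral condition is implied by $\lambda_1(-c) > 0$ together with $0 < \mu < \gamma_1(-c,f)$.

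Second, this implication is exactly the ``if'' direction of Lemma \ref{lf}: assuming $\lambda_1(-c) > 0$ and $0 < \mu < \gamma_1(-c,f)$, the lemma yields $\lambda_1(-c-\mu f) > 0$. With this in hand I would invoke Theorem \ref{t1} to conclude that $(P)$ has two positive solutions, completing the proof.

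There is no genuine obstacle here, since all the work has been done in Theorem \ref{t1} and Lemma \ref{lf}; the only thing to be careful about is not to over-state the corollary. In particular I would not claim any converse: it is not being asserted that $\mu < \gamma_1(-c,f)$ is necessary for $(P)$ to admit two positive solutions, merely that it is a convenient explicit sufficient condition, expressed in terms of $c$ and $f$ alone, under which the assumptions of Theorem \ref{t1} are met. The proof itself can therefore be presented in just a couple of lines.
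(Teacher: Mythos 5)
Your proposal is correct and is exactly the paper's route: the paper presents Corollary \ref{cor1} immediately after Lemma \ref{lf} as a restatement of Theorem \ref{t1}, the point being precisely that the ``if'' direction of Lemma \ref{lf} converts the hypotheses $\lambda_1(-c)>0$ and $0<\mu<\gamma_1(-c,f)$ into the spectral condition $\lambda_1(-c-\mu f)>0$ required by the theorem. Nothing further is needed.
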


\end{document}